\newif\ifxetexorluatex
\newtheorem{thm}{Theorem}
\newtheorem*{thm*}{Theorem}
\newtheorem{lemma}[thm]{Lemma}
\theoremstyle{definition}
\newtheorem*{remark}{Remark}
\numberwithin{thm}{section}
\newcommand{\p}{\mathfrak{p}}
\renewcommand{\P}{\mathfrak{P}}
\renewcommand{\O}{\mathcal{O}}
\newcommand{\Q}{\mathbb{Q}}
\newcommand{\Z}{\mathbb{Z}}
\renewcommand{\l}{\ell}
\newcommand{\Ss}{\mathcal{S}}
\newcommand{\set}[1]{\left\{ #1 \right\}}
\newcommand{\artin}[1]{\left[\frac{L/K}{#1}\right]}
\DeclareMathOperator{\Li}{Li}
\DeclareMathOperator{\Gal}{Gal}
\title{Formulas for Chebotarev densities of Galois extensions of number fields}
\author[N. Sweeting]{Naomi Sweeting}
\address{Department of Mathematics, University of Chicago, Chicago, IL 60637}
\email{nsweeting@uchicago.edu}
\author[K. Woo]{Katharine Woo}
\address{Department of Mathematics, Stanford University, Stanford, CA 94305}
\email{katywoo@stanford.edu}
\date{June 2018}
\begin{document}

\maketitle

\begin{abstract}
We generalize the Chebotarev density formulas of Dawsey (2017) and Alladi (1977) to the setting of arbitrary finite Galois extensions of number fields $L/K$.  In particular, if $C \subset G = \Gal(L/K)$ is a conjugacy class, then we establish that the Chebotarev density is the following limit of partial sums of ideals of $K$:
\[
-\lim_{X\rightarrow\infty} \sum_{\substack{2\leq N(I)\leq X \\ I \in S(L/K; C)}} \frac{\mu_K(I)}{N(I)}  = \frac{|C|}{|G|},
\] where $\mu_K(I)$ denotes the generalized M\"obius function and $S(L/K;C)$ is the set of ideals $I\subset \O_K$ such that $I$ has a unique prime divisor $\p$ of minimal norm and the Artin symbol $\artin{\p}$ is $C$. To obtain this formula, we generalize several results from classical analytic number theory, as well as Alladi's concept of duality for minimal and maximal prime divisors, to the setting of ideals in number fields.

\end{abstract}

\section{Introduction}

\noindent Partial sums involving the M\"obius function have historically played a crucial role in analytic number theory, most famously in the proof of the Prime Number Theorem. Such sums are ubiquitous  due to their connection to the Riemann zeta function $\zeta(s)$; for example, we have:
\[ \lim_{s\rightarrow 1^+} \zeta(s)^{-1} = \lim_{X\rightarrow\infty} \sum_{n=1}^X \frac{\mu(n)}{n} = 0. \]

Work of both Alladi \cite{Alladi} and Dawsey \cite{Dawsey} found that certain restricted partial sums of $\frac{\mu(n)}{n}$ have strong ties to Chebotarev densities of primes. To be precise, Alladi showed \cite{Alladi} that if  $\gcd(\ell,k) = 1$, then we have that 
\[
-\lim_{X\rightarrow\infty} \hspace{-0.4cm}\sum_{\substack{2\leq n\leq X\\ p_{\min}(n)\equiv \ell \pmod{k}}}\hspace{-0.7cm} \frac{\mu(n)}{n} = \frac{1}{\varphi(k)},
\]
where $\varphi$ is the Euler totient function and $p_{\min}(n)$ is the smallest prime divisor of $n$. This result can be thought of as a glimpse of Dirichlet's theorem on primes in arithmetic progressions.

Dawsey \cite{Dawsey} recognized this result as the cyclotomic case of a more general result dealing with Galois extensions of $\Q$. To introduce her work, we first set up some notation. If $K/\Q$ is a Galois extension and $p$ is an unramified prime, then we define 
\[
\left[\frac{K/\Q}{p}\right] \coloneqq \left\{\left[\frac{K/\Q}{\p}\right] \colon \p\subset \O_K \textrm{ is a prime ideal above }p\right\},
\]
where $\left[\frac{K/\Q}{\p}\right]$ is the Artin symbol for the Frobenius map, which satisfies 
\[
\alpha \mapsto \alpha^p \mod \p \textrm{ for all }\alpha \in K.
\]
It is well-known  that $\left[\frac{K/\Q}{p}\right]$ is a conjugacy class in $\Gal(K/\Q)$. Dawsey \cite{Dawsey} proved that if $C$ is a conjugacy class of $G = \Gal(K/\Q)$, then:
\[
-\lim_{X\rightarrow \infty} \hspace{-0.2cm}\sum_{\substack{2\leq n\leq X \\ \left[\frac{K/\Q}{p_{\min}(n)}\right] = C}} \hspace{-0.2cm} \frac{\mu(n)}{n} = \frac{|C|}{|G|}.
\]

\begin{remark}
Note that Alladi's result follows from Dawsey's theorem by taking $K=\Q(\zeta_k)$ where $\zeta_k$ is a primitive $k$th root of unity. Then the condition that $p_{\min}(n) \equiv \ell \pmod{k}$ is equivalent to the condition that $\left[\frac{K/\Q}{p_{\min}}\right]$  is the conjugacy class $\l$ in $\Gal(\Q(\zeta_k)/\Q) \simeq (\Z/k\Z)^\times $. 
\end{remark}

We aim to generalize Alladi's and Dawsey's results to arbitrary finite Galois extensions of number fields. Throughout this paper, $L/K$ is a finite Galois extension of number fields, and $G = \Gal(L/K)$. Let $\p\subset \O_K$ be an unramified prime. Then we define
\[
\artin{\p} \coloneqq \left\{\left[\frac{L/K}{\P} \right]\colon \P \subset \O_L \textrm{ is a prime ideal above }\p\right\},
\]
where the Artin symbol $\left[\frac{L/K}{\P} \right] \in G$ denotes the Frobenius element  satisfying:
\[
\alpha \mapsto \alpha^{N(\p)} \mod \P \textrm{ for all }\alpha \in L.
\]
Once again, $\artin{\p}$ is a conjugacy class of $\Gal(L/K)$. We will refer to this as the Artin symbol of $\p$, with no risk of ambiguity. We also define the residue of the Dedekind zeta function $\zeta_K(s)$ at $s=1$ by \begin{equation}\numberwithin{equation}{section}\label{c_K}c_K \coloneqq \lim_{s\to 1^+} (s - 1) \zeta_K(s) = \frac{2^{r_1}\cdot (2\pi)^{r_2}\cdot \textrm{Reg}_K \cdot h_K}{w_K \sqrt{|D_K|}},\end{equation}
where $r_1$ is the number of real embeddings of $K$, $2r_2$ is the number of complex embeddings, $h_K$ is the class number, $\textrm{Reg}_K$ is the regulator, $w_K$ is the number of roots of unity in $K$, and $D_K$ is the discriminant.

%We say that an ideal $I$ is salient if it has a unique prime ideal divisor of smallest norm. For such $I$ we let $p_{\min}(I)$ denote this prime ideal. If $C\subset \Gal(L/K)$ is a conjugacy class, then let $S(L/K;C) \coloneqq \left\{I\subset \O_K \textrm{ is a salient ideal} \colon \left[\frac{L/K}{p_{\min}(I)}\right] = C\right\}$

Our first theorem generalizes an intermediate result of Dawsey \cite{Dawsey}. Whenever we write $I$  (resp. $\p$), we refer to ideals (resp. prime ideals) of $\O_K$. Also, throughout this paper, $k$ denotes an unspecified positive constant depending on $K$ and $L$. For an ideal $I$, we define the maximal norm by \[M(I) \coloneqq \max_{I \subset \p} N(\p).\]

We are interested in the number of ideals of maximal norm with a certain Artin symbol, defined by \[Q_C(I) \coloneqq \#\set{I \subset \p \, \colon N(\p) =M(I)\textrm{ and } \left[\frac{L/K}{\p}\right] = C}.\] The partial sums of $Q_C(I)$ are the subject of our first theorem.
\begin{thm}\label{thm1}
Let $L/K$ be a finite Galois extension of number fields and let $G = \Gal(L/K)$ be the Galois group. If $C$ is a conjugacy class of $G$, then as $X\to\infty$, we have that
\[
\sum_{2\leq N(I)\leq X} Q_C(I) = c_K\cdot \frac{|C|}{|G|} X + O\left(X \exp\set{-k (\log X)^{1/3}}\right). 
\]
\end{thm}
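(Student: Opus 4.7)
My first move is to interchange the order of summation. Every pair $(I, \p)$ with $\p \mid I$, $N(\p) = M(I)$, and $\artin \p = C$ decomposes uniquely as $I = \p J$, where $\artin \p = C$ and $J \subset \O_K$ is an ideal satisfying $N(J) \le X/N(\p)$ and $M(J) \le N(\p)$. Hence
\[
\sum_{2 \le N(I) \le X} Q_C(I) \;=\; \sum_{\substack{\p \,:\, \artin \p = C \\ N(\p) \le X}} \Psi_K\!\left(X/N(\p),\, N(\p)\right),
\]
where $\Psi_K(y, z) := \#\{J \subset \O_K : N(J) \le y,\, M(J) \le z\}$ is the smooth-ideal counting function.

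\textbf{Extracting the main term.} I would then write $\mathbb{1}[\artin \p = C] = |C|/|G| + h(\p)$, splitting the sum into a principal piece and a Chebotarev correction. The principal piece is $(|C|/|G|) \sum_{2 \le N(I) \le X} Q(I)$, where $Q(I) = \#\{\p \mid I : N(\p) = M(I)\}$ is the unrestricted multiplicity. Any ideal with $Q(I) \ge 2$ must be divisible by two distinct primes of the same norm, so a direct estimate using the number-field Dickman--de Bruijn bound on $\Psi_K$ shows that the excess $\sum_I (Q(I) - 1)$ is $O(X^{2/3})$, well inside the target error. Landau's prime ideal theorem for $\O_K$ then gives $\#\{I : N(I) \le X\} = c_K X + O(X \exp(-c\sqrt{\log X}))$, so the principal piece contributes $c_K (|C|/|G|) X + O(X \exp(-c \sqrt{\log X}))$.

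\textbf{Chebotarev correction and main obstacle.} The residual error is
\[
E(X) \;=\; \sum_{\substack{\p \\ N(\p) \le X}} h(\p)\, \Psi_K\!\left(X/N(\p),\, N(\p)\right),
\]
which admits only the trivial bound $O(X)$ without cancellation. The effective Chebotarev density theorem of Lagarias--Odlyzko yields $\mathcal{E}(v) := \sum_{N(\p) \le v} h(\p) \ll v \exp(-c \sqrt{\log v})$, and Abel summation converts $E(X)$ into a Stieltjes integral against $d[\Psi_K(X/v, v)]$ in $v$. Estimating this integral demands separate treatment of the large-prime range $v > \sqrt X$, where $\Psi_K(X/v, v)$ collapses to the ideal count and has variation $\ll c_K \sqrt X$, and the small-prime range $v \le \sqrt X$, where a Dickman-type upper bound for $\Psi_K$ controls the total variation. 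This bookkeeping is the main technical obstacle, and the logarithmic losses accumulated in the Abel summation are exactly what weaken the raw rate $\exp(-c \sqrt{\log X})$ into the stated $\exp(-k(\log X)^{1/3})$.
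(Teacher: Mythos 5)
Your opening decomposition $\sum_{N(I)\le X} Q_C(I) = \sum_{\p:\,\artin{\p}=C} \Psi(X/N(\p),N(\p))$ is exactly the paper's starting point, and your reorganization into a principal piece $\frac{|C|}{|G|}\sum Q(I)$ plus a Chebotarev correction $E(X)$ is a mild repackaging of what the paper does (the paper instead compares both $\sum Q_C$ and $\sum Q$ to a common integral $\int_Y^X \Psi(X/t,t)\,dt/\log t$, which neatly handles the small-prime contribution by isolating $N(\p)\le Y := \exp((\log X)^{2/3})$ first). The Abel summation you sketch for $E(X)$ can in principle be made to work, but the pointwise bound $|\mathcal{E}(v)| \ll v\exp(-c\sqrt{\log v})$ only wins when $v$ is large; the paper sidesteps this by explicitly discarding primes of norm $\le Y$ via a separate smooth-ideal estimate (Lemma 3.1) and only invoking Chebotarev on $Y < N(\p) \le X$. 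If you pursue Abel summation without that split you will find, as you anticipate, that the bookkeeping is genuinely delicate.

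The concrete gap is the claim that the excess $\sum_I (Q(I)-1)$ is $O(X^{2/3})$; this is false for $K \ne \Q$. The excess is $\gg \sum_{\p} \binom{g(N(\p))}{2}\,\Psi(X/N(\p)^2, N(\p))$, where $g(p)$ is the number of degree-one primes above $p$, and a positive density of rational primes have $g \ge 2$ whenever $[K:\Q] > 1$. Taking primes of norm near $X^{1/4}$ alone already contributes on the order of $X^{3/4}\rho(2)/\log X$, which dwarfs $X^{2/3}$; more generally the excess is of size roughly $X\exp\bigl\{-c\sqrt{\log X \log\log X}\bigr\}$, subpolynomial in $X$ but much larger than any fixed power $X^{1-\delta}$. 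Fortunately this is still comfortably inside the target error $O(X\exp\{-k(\log X)^{1/3}\})$, so your overall strategy survives; but the bound needs to be derived as in the paper's Lemma 3.3, by splitting the prime range at $Y$, bounding the small-prime contribution by $\Psi(X,Y) \ll X\exp\{-k(\log X)^{1/3}\}$ via the Dickman asymptotic, and treating the range $Y < N(\p) \le \sqrt{X}$ with the prime ideal theorem and a second Dickman estimate. As stated, the ``well inside the target error'' assertion, while true, is reached by an incorrect intermediate bound that would not withstand scrutiny.
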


Alladi \cite{Alladi} developed a duality principle to convert a similar intermediate result into his main theorem. His duality principle uses properties of the M\"{o}bius function to relate functions on maximal and minimal prime divisors. In order to apply similar methods, we first define the M\"obius function for ideals of a number field $K$ as 
\begin{equation}
\mu_K(I) \coloneqq \begin{cases} 1, & \textrm{if } I = \O_K \\ 0, &\textrm{if } I \subset \p^2 \textrm{ for some } \p \\ (-1)^k, &\textrm{if } I = \p_1 \cdots \p_k.\end{cases}
\end{equation} For the remainder of this paper, we will write $\mu(I)$ for $\mu_K(I).$ Many of the well-known identities for the M\"obius function on natural numbers extend to ideals as well, which allows us to prove a version of the duality principle for prime ideals in \S\ref{duality}. 

Armed with our duality result, we obtain an analogue of Alladi's and Dawsey's theorems. First we establish some notation. We call an ideal $I \subset \O_K$ \emph{salient} if it has a unique prime divisor of smallest norm, and for such $I$ we denote this prime divisor by $\p_{\min} (I)$. %(Note $\p_{\min}(I)$ may appear with multiplicity in the prime factorization of $I$.) 
If $C\subset \Gal(L/K)$ is a conjugacy class, then we define \begin{equation}S(L/K; C) \coloneqq \set{I \subset \O_K \text{ a salient ideal}\, \colon \p_{\min}(I) \textrm{ is unramified and } \artin{\p_{\min}(I)} = C}.\end{equation}

\begin{thm}\label{thm2}
Let $L/K$ be a finite Galois extension of number fields and let $G = \Gal(L/K)$ be the Galois group. If $C$ is a conjugacy class of $G$, then we have that 
\[
-\lim_{X\rightarrow\infty} \sum_{\substack{2\leq N(I)\leq X \\ I \in S(L/K;C)}} \frac{\mu(I)}{N(I)}  = \frac{|C|}{|G|}.
\]
\end{thm}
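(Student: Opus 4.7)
My approach is to combine the Alladi-type duality for prime ideals (from Section~\ref{duality}) with the asymptotic of Theorem~\ref{thm1}, via Abel summation.

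I anticipate the duality from Section~\ref{duality} to read
\[
\sum_{\substack{J\mid I\\ J\neq\O_K}}\mu(J)\,Q_C(J)\;=\;-\mathbf{1}_{I\in S(L/K;C)}
\]
for every ideal $I\subseteq\O_K$; here $Q_C$ absorbs the possibility that several prime ideals share the maximal norm. My first step is to multiply both sides by $\mu(I)/N(I)$, sum over $2\le N(I)\le X$, and interchange the order of summation. Using the multiplicativity of $\mu$ on coprime squarefree ideals, this rewrites the target as
\[
-\sum_{\substack{2\le N(I)\le X\\ I\in S(L/K;C)}}\frac{\mu(I)}{N(I)}
\;=\;\sum_{\substack{J\text{ squarefree},\,J\neq\O_K\\ N(J)\le X}}\frac{Q_C(J)}{N(J)}\,M_J\!\left(\frac{X}{N(J)}\right),
\]
where $M_J(Y):=\sum_{N(K)\le Y,\ \gcd(J,K)=\O_K}\mu(K)/N(K)$.

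The analytic core is then to evaluate the right-hand side as $X\to\infty$. Via the M\"obius inclusion--exclusion $\mathbf{1}(\gcd(J,K)=\O_K)=\sum_{D\mid\gcd(J,K)}\mu(D)$ one obtains the decomposition $M_J(Y)=M(Y)+\sum_{D\mid J,\,D\neq\O_K}M_D(Y/N(D))/N(D)$, where $M(Y):=\sum_{N(K)\le Y}\mu(K)/N(K)$. Using the PNT bound $M(Y)=O(\exp(-k(\log Y)^{1/2}))$ for $K$, the correction contributes only $o(1)$ to the double sum. Applying Theorem~\ref{thm1} via Abel summation to the leading piece, together with the change of variables $u=X/N(J)$, transforms the principal term into
\[
c_K\cdot\frac{|C|}{|G|}\int_{1}^{X}\frac{M(u)}{u}\,du+o(1).
\]
The Mertens-type identity
\[
\int_{1}^{\infty}\frac{M(u)}{u}\,du\;=\;\lim_{X\to\infty}\sum_{N(I)\le X}\frac{\mu(I)\log(X/N(I))}{N(I)}\;=\;\frac{1}{c_K},
\]
which is equivalent to PNT for $K$ (viewed through the behavior of $-\zeta_K'(s)/\zeta_K(s)^2$ at $s=1$), then yields the claimed limit $|C|/|G|$.

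The principal technical obstacle will be the uniform control of the correction $M_J-M$ as $J$ varies. A careful truncation argument --- balancing the depth of the M\"obius inclusion--exclusion against the subexponential decay in Theorem~\ref{thm1}'s error term --- is needed to ensure the cumulative error remains $o(1)$ as $X\to\infty$.
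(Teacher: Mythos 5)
Your route is genuinely different from the paper's, and the gap you flag is real and substantial. The paper's Lemma~\ref{duality} proves $\sum_{J\supset I}\mu(J)\,\mathbf{1}_{J\in S(L/K;C)} = -Q_C(I)$ (the \emph{reverse} of the identity you posit; both directions hold, but yours would need its own proof --- it is not M\"obius inversion of theirs), and then inverts to get $\mu(I)\mathbf{1}_{I\in S} = -\sum_{J\supset I}\mu(I/J)Q_C(J)$. Summing over $N(I)\le X$ and setting $I' = I/J$ gives the \emph{unconstrained} double sum $-\sum_{N(I')N(J)\le X}\frac{\mu(I')}{N(I')}\cdot\frac{Q_C(J)}{N(J)}$, which is split at $N(I')=\sqrt X$ and dispatched with Lemma~\ref{lemma4}, Lemma~\ref{mobiussum}, and Lemma~\ref{mulog/n}. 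Because the paper sums over all pairs $(I',J)$, no coprimality condition arises and the restricted sums $M_J$ never appear.

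Your interchange produces $M_J$, and the burden of proving
\[
\sum_{J}\frac{Q_C(J)}{N(J)}\Bigl(M_J\bigl(X/N(J)\bigr)-M\bigl(X/N(J)\bigr)\Bigr)=o(1)
\]
is a genuine missing step, not a routine estimate. A uniform bound $|M_D|\le B$ gives only $|M_J-M|\ll\sum_{D\mid J,\,D\neq\O_K}1/N(D)$, which is unbounded in $J$ (by Mertens it can be of size $\log\log N(J)$ for squarefree $J$ with many small prime factors), while the weight $\sum_{N(J)\le X} Q_C(J)/N(J)$ grows like $\log X$; so crude estimates do not close it. One would instead need decay of $M_D(Z)$ in $Z$ that is uniform in $D$, i.e.\ zero-free-region bounds for $\zeta_K(s)\prod_{\p\mid J}\bigl(1-N(\p)^{-s}\bigr)$ with implied constants controlled in the number of prime divisors of $J$ --- considerably more delicate than anything the paper uses, and exactly what its choice of inversion sidesteps. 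The Abel-summation step and your identity $\int_1^\infty M(u)\,du/u = 1/c_K$ are correct (the latter is equivalent to Lemma~\ref{mulog/n} together with Lemma~\ref{mobiussum}), but the proof as sketched is incomplete.
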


This paper is organized as follows. In \S 2, we  introduce the notation and background information used in our proofs. In particular, in \S \ref{classical} we  introduce some classical number theory results, including Landau's Prime Ideal Theorem and the Chebotarev Density Theorem. In \S \ref{duality}, we  discuss how Alladi's duality principle  extends to the prime ideal case. In \S \ref{integral} and \S \ref{bounds}, we introduce various bounds and asymptotics needed in our proofs. In \S 3 and \S 4, we  prove Theorems 1.1 and 1.2, respectively. Finally, in \S 5, we demonstrate some examples of Theorem 1.2. 

\section{Preliminaries}
\subsection{Duality principle for ideals}\label{duality}
Alladi \cite{Alladi} introduced a duality principle relating functions on maximal and minimal prime divisors via M\"obius inversion. This principle is the conceptual basis for our work in the number field setting.

\begin{thm*}[Alladi, 1977]
Let $p_{\max}(n)$ denote the largest prime divisor of $n$ and let $p_{\min}(n)$ denote the smallest prime divisor. If $f$ is a function defined on the integers with $f(1) = 0$, then we have that
{\[
\sum_{d\mid n} \mu(d) f(p_{\max}(d)) = -f(p_{\min}(n))
\] and
\[
\sum_{d\mid n} \mu(d) f(p_{\min}(d)) = -f(p_{\max}(n)).
\]}
\end{thm*}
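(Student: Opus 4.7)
The plan is to exploit two observations: the Möbius function vanishes on non-squarefree integers, so only squarefree divisors of $n$ contribute; and the squarefree divisors of $n = p_1^{a_1}\cdots p_k^{a_k}$ (with $p_1 < p_2 < \cdots < p_k$) are in bijection with subsets of $\{p_1,\ldots,p_k\}$, under which $\mu(d) = (-1)^{|S|}$ for the subset $S$ corresponding to $d$. This reduces both identities to finite combinatorial identities indexed by subsets of the set of distinct prime divisors of $n$.

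For the first identity, I would group squarefree divisors $d > 1$ of $n$ by their largest prime factor. For each $j \in \{1,\ldots,k\}$, a squarefree divisor $d$ with $p_{\max}(d) = p_j$ has the form $d = p_j \cdot d'$ where $d'$ is a squarefree divisor of $m_j := p_1 \cdots p_{j-1}$ (with $m_1 = 1$); and $\mu(d) = -\mu(d')$. The divisor $d = 1$ contributes $\mu(1) f(p_{\max}(1)) = f(1) = 0$ by hypothesis. Therefore
\[
\sum_{d \mid n} \mu(d) f(p_{\max}(d)) = -\sum_{j=1}^{k} f(p_j) \sum_{d' \mid m_j} \mu(d').
\]
Invoking the standard identity $\sum_{d' \mid m} \mu(d') = [m = 1]$, the inner sum is $1$ when $j = 1$ (since $m_1 = 1$) and $0$ otherwise. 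Hence the entire expression collapses to $-f(p_1) = -f(p_{\min}(n))$, which is the first claim.

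The second identity follows by the dual grouping: for a squarefree $d > 1$ with $p_{\min}(d) = p_j$, write $d = p_j \cdot d'$ where $d'$ runs over squarefree divisors of $m_j' := p_{j+1} \cdots p_k$, and proceed identically; the only surviving term is $j = k$, giving $-f(p_k) = -f(p_{\max}(n))$. The vanishing condition $f(1) = 0$ is again what disposes of the $d = 1$ term, and symmetry otherwise handles everything. There is no real obstacle here: the proof is pure bookkeeping, and the only subtlety to be careful about is consistently treating the empty-product boundary cases (making sure $m_1 = 1$ when $j = 1$ in the first argument, and $m_k' = 1$ when $j = k$ in the second), so that the Möbius-sum identity $\sum_{d' \mid m} \mu(d') = [m=1]$ singles out exactly the extreme prime factor.
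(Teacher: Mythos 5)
Your proof is correct. The paper states Alladi's duality theorem as background and does not reprove it, so there is no paper proof to compare against line by line; but your argument closely parallels the proof the paper gives for its ideal-theoretic analogue in \S 2.1. There, the factorization of an ideal is grouped into blocks of primes of equal norm, the fact that $f$ depends only on the minimal-norm block is used to split the sum, and the identity $\mu * 1 = \delta$ (i.e.\ $\sum_{J \supset I}\mu(J) = [I = \mathcal{O}_K]$) collapses everything to the block of maximal norm. You do exactly the same thing in the integer setting: restrict to squarefree divisors, group by extreme prime factor, and invoke $\sum_{d\mid m}\mu(d)=[m=1]$ so that only the boundary term survives; the hypothesis $f(1)=0$ disposes of the $d=1$ term, as you note. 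The one structural difference worth keeping in mind (it is why the paper's lemma looks different from Alladi's original statement) is that in $\Z$ the extremal prime is automatically unique, whereas for ideals several primes can share the maximal norm --- which is why the right-hand side in the paper is the count $Q_C(I)$ rather than an indicator.
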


Alladi's proof does not fully extend  to the number field setting because it relies on an ordering of primes. Adapting his methods requires significant restrictions on the functions $f$, but the connection between information about maximal and minimal divisors is essentially preserved; we present the case we require for the proof of Theorem 2.

Throughout the remainder of this paper, we fix a Galois extension $L/K$ with Galois group $G$, and a conjugacy class $C\subset G$.
\begin{lemma}
Let $f(I)$ be the indicator function of $S(L/K;C)$. Then for ideals $I$, we have: 
\[
\sum_{J\supset I} \mu(J)f(J) =- Q_C(I).
\]
\end{lemma}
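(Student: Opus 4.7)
The plan is to expand the left-hand side by grouping divisors of $I$ according to which prime plays the role of $\p_{\min}$.

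First, since $\mu(J) = 0$ unless $J$ is squarefree, I would restrict the sum to the squarefree divisors of $I$. Let $P = \{\p_1, \dots, \p_r\}$ be the set of distinct prime divisors of $I$; then every squarefree $J \supset I$ is of the form $J_T \coloneqq \prod_{\p \in T}\p$ for a unique subset $T \subseteq P$, with $\mu(J_T) = (-1)^{|T|}$. Thus the left-hand side becomes
\[
\sum_{\substack{T \subseteq P \\ J_T \in S(L/K;C)}} (-1)^{|T|}.
\]

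Next I would reindex this sum by the minimal-norm prime of $T$. For each $\p \in P$, write $B_\p = \{\q \in P \setminus \{\p\} : N(\q) \le N(\p)\}$ and $A_\p = \{\q \in P : N(\q) > N(\p)\}$. The condition that $J_T$ is salient with $\p_{\min}(J_T) = \p$ translates exactly to: $\p \in T$ and $T \cap B_\p = \emptyset$, i.e.\ $T = \{\p\} \sqcup T'$ for some $T' \subseteq A_\p$. The further condition that $J_T \in S(L/K;C)$ adds only that $\p$ be unramified with $\artin{\p} = C$. So, summing over all $\p$,
\[
\sum_{\substack{T \subseteq P \\ J_T \in S(L/K;C)}} (-1)^{|T|} = \sum_{\substack{\p \in P \\ \p \text{ unramified}, \, \artin{\p}=C}} \sum_{T' \subseteq A_\p} (-1)^{|T'|+1}.
\]

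The inner binomial sum is the key collapse: $\sum_{T' \subseteq A_\p}(-1)^{|T'|}$ vanishes unless $A_\p = \emptyset$, in which case it equals $1$. Thus only primes $\p \in P$ with $A_\p = \emptyset$ contribute, and $A_\p = \emptyset$ means precisely that $N(\p) = M(I)$. Each such prime (provided it is unramified and has Artin symbol $C$) contributes $-1$, and the number of such primes is exactly $Q_C(I)$ by definition (the Artin symbol condition implicitly forces unramifiedness, since $\artin{\p}$ is only defined at unramified primes). Collecting the signs yields $-Q_C(I)$, as desired.

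The only subtle point — and the one I would verify carefully — is the handling of ties in norms among the primes dividing $I$: the saliency hypothesis is exactly what forces $\p$ to be the unique smallest-norm prime of $T$, so a prime $\q \ne \p$ of equal norm must be excluded from $T$, which is why I put such $\q$ into $B_\p$ rather than $A_\p$. Once that bookkeeping is correct, the rest is a straightforward M\"obius-style cancellation and poses no serious obstacle.
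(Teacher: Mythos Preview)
Your proof is correct and follows essentially the same approach as the paper: both arguments decompose the squarefree divisors of $I$ according to their minimal-norm prime content and then exploit the M\"obius/binomial cancellation over the strictly-larger-norm primes to isolate only the primes of maximal norm. The paper packages the primes of equal norm into ``blocks'' $I_1,\dots,I_k$ and uses $\mu * 1 = \delta_{\O_K}$ on the tail $I_{j+1}\cdots I_k$, whereas you index by an individual prime $\p$ and sum over $T'\subseteq A_\p$ directly, but the two computations are the same cancellation in slightly different notation.
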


\begin{proof}
Given an ideal $I$, we prime factor $I = \p_1^{e_1} \cdots \p_r^{e_r}$, such that $N(\p_i) \leq N(\p_{i+1})$. Split this prime factorization (preserving ordering) into $I = I_1 \cdots I_k,$ where all prime factors of $I_j$ have the same norm.  Because $f$ depends only on the primes of minimal norm, we have:
\begin{align*}
\sum_{J \supset I} \mu(J) f(J) &= \sum_{j = 1}^k \sum_{\substack{J \supset I_j \\ J \neq \O_K}}\mu(J) f(J)\hspace{-0.2cm} \sum_{J' \supset I_{j+1} \cdots I_k} \mu(J') 
\end{align*} For the same reason as in the integral case, the Dirichlet convolution of $\mu$ and $1$ is the indicator function of $\O_K$. Therefore we can discard almost all terms from the sum:
\[\sum_{J \supset I}\mu(J) f(J) = \sum_{\substack{J\supset I_{k} \\ J \neq \O_K}} \mu(J) f(J) = -Q_C(I),
\] where we notice that $f(J) = 0$ for all $J$ in this sum except prime ideals with $\artin{\p} = C$. 

\end{proof}

\begin{remark}
An analogous result holds if we replace $\artin{\p_{\min}} = C$ with any other condition on $\p_{\min}(I)$, such as splitting completely in $L$ or lying over a particular prime of $\Q$. 
\end{remark}

\subsection{Classical results}\label{classical}
We now review some well-known estimates that will appear in our proofs. We require quantitative versions of both the Prime Ideal Theorem and the Chebotarev Density Theorem. We define prime-counting functions by \begin{equation}\pi(K;X) \coloneqq  \#\set{\p \subset \O_K\, \colon N(\p) \leq X}\end{equation} and \begin{equation}\pi_C(L/K; X) \coloneqq \#\set{\p \subset \O_K\, \colon N(\p) \leq X \textrm{ and } \artin{\p} = C}.\end{equation} Due to Lagarias and Odlyzko \cite{LO}, we have the following version of the Chebotarev Density Theorem.
\begin{thm*}[Lagarias-Odlyzko, 1975]
Let  $L/K$ be a Galois extension and let $C \subset G$ be a conjugacy class. Then there exist  constants $c_1, c_2 > 0$ so that, for large enough $X$, we have:  \[\bigg|\pi_C(L/K; X) - \frac{|C|}{|G|}\Li(X)\bigg| \leq c_1X\exp\set{-c_2\sqrt{\log X}}.\]

\end{thm*}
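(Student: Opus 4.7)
The standard approach passes first from $\pi_C$ to the Chebyshev-type weighted sum
\[
\psi_C(X) \coloneqq \sum_{\substack{\p\ \text{unramified},\ m \geq 1 \\ N(\p)^m \leq X,\ \mathrm{Frob}_\p^m \in C}} \log N(\p),
\]
where $\mathrm{Frob}_\p^m \in C$ abbreviates the condition that the conjugacy class of $\mathrm{Frob}_\p^m$ in $G$ equals $C$. The contribution of ramified primes is $O(\log|D_L|)$, the contribution from $m \geq 2$ is $O(\sqrt{X}\log X)$, and standard partial summation converts an estimate of the form $\psi_C(X) = \tfrac{|C|}{|G|}X + O(X e^{-c\sqrt{\log X}})$ into the stated bound for $\pi_C(L/K;X)$, with $\Li(X)$ absorbing the corresponding integral main term.

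By Schur orthogonality on $G$, the indicator function of $C$ equals $\frac{|C|}{|G|}\sum_\chi \overline{\chi(g_0)}\,\chi$ for any fixed $g_0 \in C$, where $\chi$ runs over the irreducible characters of $G$. Expanding $\psi_C$ along this decomposition reduces the problem to controlling
\[
\psi(X,\chi) \coloneqq \sum_{m \geq 1}\ \sum_{N(\p)^m \leq X} \chi(\mathrm{Frob}_\p^m)\log N(\p)
\]
for each irreducible $\chi$. The trivial character contributes the main term $X$ via Landau's Prime Ideal Theorem for $K$, and it remains to prove $\psi(X,\chi) = O(X e^{-c\sqrt{\log X}})$ for every non-trivial $\chi$.

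The Dirichlet series generating $\psi(X,\chi)$ is, up to local factors at ramified primes, the logarithmic derivative of the Artin $L$-function $L(s,\chi,L/K)$. Because Artin $L$-functions are not known in general to be entire, I would invoke Brauer's induction theorem to write $\chi = \sum_j n_j\operatorname{Ind}_{H_j}^G \eta_j$ for one-dimensional characters $\eta_j$ of subgroups $H_j \leq G$ and integers $n_j \in \Z$. Inductivity of Artin $L$-functions then gives
\[
L(s,\chi,L/K) = \prod_j L(s,\eta_j,L/L^{H_j})^{n_j},
\]
a product of integer powers of Hecke $L$-functions over intermediate subfields. These admit meromorphic continuation, a functional equation, and a classical de la Vall\'ee Poussin zero-free region of shape $\sigma > 1 - c/\log(q_{\eta_j}(|t|+2))$, with at most one possible exceptional real zero. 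A truncated Perron integral against $-L'/L(s,\chi)$, its contour shifted into this zero-free region and its height optimized against $X$, then produces $\psi(X,\chi) = O(X e^{-c\sqrt{\log X}})$.

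The principal obstacle is the possible Siegel zero: one of the real Hecke $L$-functions $L(s,\eta_j,L/L^{H_j})$ may possess an exceptional real zero extremely close to $s=1$, which would destroy a naive power-saving bound. Landau--Page's lemma isolates at most one such exceptional character among the $\eta_j$, and Siegel's theorem bounds its exceptional zero at distance $\gg_\epsilon q^{-\epsilon}$ from $s=1$ (with an ineffective implied constant in $\epsilon$); this is precisely what forces the square-root-log error shape in place of a stronger $X^{1-\delta}$ saving. Careful tracking of the dependence on $[L:\Q]$ and $|D_L|$ through the Brauer decomposition and the explicit formula then yields constants $c_1,c_2 > 0$ uniform in everything except the fixed extension $L/K$.
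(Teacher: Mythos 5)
The paper offers no proof of this statement: it is quoted verbatim from Lagarias--Odlyzko \cite{LO} as an external input, so the only comparison to make is with the standard literature. Your sketch is a workable route and is essentially the ``Brauer induction'' proof of effective Chebotarev (the one used, e.g., by M.~Murty--V.~Murty--Saradha), whereas Lagarias and Odlyzko themselves argue differently: they use the Deuring--MacCluer reduction, replacing $(L/K, C)$ by the cyclic extension $L/L^{\langle g\rangle}$ for $g\in C$, so that only abelian Artin $L$-functions (hence Hecke $L$-functions, via class field theory) ever appear, and they extract the zero-free region from the factorization of $\zeta_L(s)$; this avoids Brauer induction entirely and keeps all constants effective up to the one possible exceptional zero. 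The one genuine inaccuracy in your writeup is the role you assign to the Siegel zero. Since $L/K$ is \emph{fixed} and $c_1, c_2$ are allowed to depend on it, a possible exceptional real zero is a single fixed $\beta_0<1$, contributing a term $O(X^{\beta_0})=O\bigl(X\exp\set{-(1-\beta_0)\log X}\bigr)$, which is far smaller than $X\exp\set{-c_2\sqrt{\log X}}$; neither Landau--Page nor Siegel's theorem is needed, and invoking Siegel would gratuitously make the constants ineffective. The $\exp\set{-c_2\sqrt{\log X}}$ shape is forced not by exceptional zeros but by the classical de la Vall\'ee Poussin zero-free region of width $\asymp 1/\log(|t|+2)$, through the usual optimization of the truncation height in the Perron/explicit-formula step. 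With that correction your outline closes up into a complete proof.
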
 Landau's form of the Prime Ideal Theorem \cite{Landauart} immediately follows from this statement of the Chebotarev Density Theorem. 
\begin{thm*}[Landau, 1903]
If $K$ is a number field, then there exist constants $c_1, c_2 > 0$ such that, for large enough $X$, we have \[\big| \pi(K; X) - \Li (X) \big| \leq c_1X \exp\set{-c_2\sqrt{\log X}}. \]
\end{thm*}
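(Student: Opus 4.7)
The plan is to derive Landau's Prime Ideal Theorem as a direct specialization of the Lagarias--Odlyzko Chebotarev Density Theorem stated immediately before it, by taking the extension to be $L = K$.

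First I would observe that for the trivial extension $K/K$, the Galois group $G = \Gal(K/K)$ is itself trivial, so $|G| = 1$ and the only conjugacy class is $C = \{1\}$ with $|C|/|G| = 1$. Every prime $\p \subset \O_K$ is unramified in $K/K$ (the trivial extension has trivial ramification), so the Artin symbol $\artin{\p}$ is defined for every prime $\p$, and it equals the identity automorphism: the Frobenius congruence $\alpha \mapsto \alpha^{N(\p)} \pmod{\p}$ is tautologically satisfied by $\mathrm{id} \in \Gal(K/K)$. Consequently
\[
\pi_C(K/K; X) = \#\{\p \subset \O_K \,:\, N(\p) \leq X\} = \pi(K; X).
\]

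Substituting these identifications into the Lagarias--Odlyzko bound, one immediately obtains
\[
\big|\pi(K; X) - \Li(X)\big| \;=\; \bigg|\pi_C(K/K; X) - \tfrac{|C|}{|G|}\Li(X)\bigg| \;\leq\; c_1 X \exp\{-c_2 \sqrt{\log X}\}
\]
for all sufficiently large $X$, with the same constants $c_1, c_2$. This is precisely the stated bound.

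There is essentially no obstacle in this deduction, since all of the analytic input is packaged into the Chebotarev statement; the reduction is pure bookkeeping. The only subtlety worth remarking on is that $\pi_C$ counts only primes with a defined Artin symbol, i.e.\ unramified primes. In the general Chebotarev setting this excludes the finitely many ramified primes of $K$ in $L$ (an $O(1)$ discrepancy easily absorbed into the error term), but in the trivial extension $K/K$ no correction of any kind is needed, so the inequality transfers with unchanged constants.
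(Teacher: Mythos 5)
Your proposal is correct and matches the paper's approach exactly: the paper simply remarks that Landau's theorem "immediately follows" from the Lagarias--Odlyzko statement of the Chebotarev Density Theorem, which is the specialization $L = K$ that you carry out. Your write-up just makes the bookkeeping explicit.
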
 We also will need Murty and Van Order's explicit version \cite{Murty} of a classical asymptotic for counting ideals of $K$. 
As $X\to\infty$, we have
\begin{equation}\label{idealcount}\sum_{N(I) \leq X}\hspace{-0.1cm} 1 = c_K X + O\left(X^{1-1/d}\right),\end{equation} where $c_K$ is defined as above in (1.\ref{c_K}) and $d = [K: \Q].$

%add ref here

\subsection{Smooth ideals and the Dickman function}\label{integral}
The Dickman function $\rho(\beta)$ is defined to be the continuous solution to the equations:
\begin{align*}
\rho(\beta) &= 1, \textrm{ for } 0 \leq \beta \leq 1\\
-\beta\rho'(\beta) &= \rho(\beta - 1), \textrm{ for } \beta > 1.
\end{align*}

This function appears classically in counting smooth numbers, numbers whose prime divisors are bounded. We use it to count the number-field analogue, ``smooth ideals.'' We define $\Ss(X, Y) \coloneqq \set{I \subset \O_K \, \colon N(I) \leq X,\; M(I) \leq Y},$ and we denote $\Psi(X, Y) \coloneqq \#\Ss(X, Y).$ Moree \cite{Moree} proved that
\begin{equation}\numberwithin{equation}{section}\label{Moreelem}
\Psi(X, Y) = X\rho(\beta) \left(1 + O_\varepsilon \left(\frac{\beta\log(\beta + 1)}{\log X}\right)\right),
\end{equation} where $\beta = \log X / \log Y$ and the asymptotic is uniform for $1 \leq \beta \leq (\log X)^{1 - \varepsilon}.$

The Dickman function is easily bounded above  by $1/\Gamma(\beta + 1),$ which yields a uniform asymptotic \begin{equation}\label{rhobound}\rho(\beta) = O\left(\beta^{-1/2} \exp\set{-\beta \log \beta / e}\right).\end{equation}

\subsection{Classical bounds in the number field setting}\label{bounds}
In this section, we prove some number field analogues of well-known bounds that we will need in our proofs. 
%We begin by proving a simple result in detail to demonstrate the use of partial summation, which is crucial for many of our estimates.
We also estimate several partial sums involving the M\"{o}bius function. Our first bounds are given by the following lemma. \begin{lemma}\label{mobiussum}
As $X\to\infty$, we have: \begin{align*}&(1) \; \;\; \sum_{N(I) \leq X} \mu(I) = O
\left(X\exp\set{-k(\log X)^{1/12}}\right). \\
&(2) \; \;\; \sum_{N(I)\leq X} \frac{\mu(I)}{N(I)} = O\left(\exp\set{-k(\log X)^{1/12}}\right).\end{align*}
\end{lemma}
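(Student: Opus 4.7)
The plan is to prove (1) first, then deduce (2) by partial summation.

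For (1), I would derive a Mertens-type estimate for $M_K(X) \coloneqq \sum_{N(I) \leq X}\mu(I)$ from Landau's Prime Ideal Theorem (PIT) of \S\ref{classical}: $\psi_K(X) \coloneqq \sum_{N(I) \leq X}\Lambda_K(I) = X + O\bigl(X\exp\{-c\sqrt{\log X}\}\bigr)$, where $\Lambda_K$ is the number-field von Mangoldt function, equal to $\log N(\p)$ on prime powers $I=\p^m$ and zero otherwise. The bridge between $\Lambda_K$ and $\mu_K$ is the Dirichlet convolution identity $\Lambda_K = \mu_K * \log N$ (equivalently $\log N(I) = \sum_{J\mid I}\Lambda_K(J)$), the direct analogue of the classical identity, which follows by matching Dirichlet series against $-\zeta_K'(s)/\zeta_K(s)$.

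Swapping the order of summation in this convolution identity gives
\[
\psi_K(X) = \sum_{N(J)\leq X}\mu(J)\, T_K\bigl(X/N(J)\bigr),\qquad T_K(Y) \coloneqq \sum_{N(I)\leq Y}\log N(I).
\]
Partial summation against (\ref{idealcount}) yields $T_K(Y) = c_K Y\log Y - c_K Y + O(Y^{1-1/d}\log Y)$. Substituting back expresses $\psi_K(X) - X$ as a linear combination of the Möbius sums $A(X) \coloneqq \sum_{N(J)\leq X}\mu(J)/N(J)$ and $\sum_{N(J)\leq X}(\mu(J)/N(J))\log(X/N(J))$, up to a controllable error. Combined with a second linear relation obtained from $\sum_{N(J)\leq X}\mu(J)\,\#\{I : J \mid I,\,N(I) \leq X\} = 1$ (a consequence of $\mu_K * 1 = \varepsilon$) and (\ref{idealcount}), I can solve for $A(X)$ and, after a dyadic Tauberian step, for $M_K(X)$, yielding (1). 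Part (2) then follows from (1) via $1/N(I) = \int_{N(I)}^\infty t^{-2}\,dt$ and the fact that $\sum\mu(I)/N(I) = 1/\zeta_K(1) = 0$, so the truncated sum equals minus the tail of the convergent series.

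The main obstacle is the Tauberian step converting the log-weighted estimate into a pointwise estimate for $M_K$: since this elementary route avoids complex-analytic contour shifting, the subexponential savings $\exp\{-c\sqrt{\log X}\}$ from PIT degrade after balancing error contributions across dyadic ranges, producing only the $(\log X)^{1/12}$ exponent asserted in the lemma. A sharper savings of the form $\exp\{-k\sqrt{\log X}\}$ could alternatively be obtained by applying Perron's formula to $1/\zeta_K(s)$ and shifting the contour into the standard zero-free region of $\zeta_K$, but the weaker bound above suffices for the applications of this lemma in \S 3 and \S 4.
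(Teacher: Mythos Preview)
Your approach differs from the paper's: for (1) the paper simply invokes Landau's classical result over $\Q$ and observes that the same complex-analytic proof --- Perron's formula applied to $1/\zeta_K(s)$ together with the standard zero-free region for $\zeta_K$ --- carries over verbatim to general $K$; part (2) is then deduced by partial summation, as you also propose.

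Your elementary strategy of deriving (1) from the Prime Ideal Theorem is legitimate in spirit, but the specific decomposition you sketch has a gap.  Substituting $T_K(Y) = c_K Y\log Y - c_K Y + O(Y^{1-1/d}\log Y)$ into $\psi_K(X) = \sum_{N(J)\leq X}\mu(J)\,T_K(X/N(J))$ produces an error of order
\[
\sum_{N(J)\leq X}\left(\frac{X}{N(J)}\right)^{1-1/d}\log\frac{X}{N(J)} \;\ll\; X^{1-1/d}(\log X)\sum_{N(J)\leq X}N(J)^{-(1-1/d)} \;\ll\; X\log X,
\]
which swamps everything.  The same problem afflicts your second relation from $\mu_K*1=\varepsilon$ together with (\ref{idealcount}): the error $O((X/N(J))^{1-1/d})$ summed over $J$ is again of size $X$.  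The difficulty is that the ideal-counting error is only a power saving, not subexponential.

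The fix is to reverse the roles and use the dual identity $-\mu_K\log N = \mu_K*\Lambda_K$ (equivalently $(1/\zeta_K)' = (1/\zeta_K)(-\zeta_K'/\zeta_K)$), giving
\[
-\sum_{N(I)\leq X}\mu(I)\log N(I) \;=\; \sum_{N(J)\leq X}\mu(J)\,\psi_K\!\left(\frac{X}{N(J)}\right).
\]
Now the subexponential PIT error sits inside the convolution, and substituting $\psi_K(Y)=Y+O(Y\exp\{-c\sqrt{\log Y}\})$ yields a controllable remainder.  Combining this with $M_K(X)\log X = \sum_{N(I)\leq X}\mu(I)\log N(I) + \int_1^X M_K(t)\,dt/t$ and iterating over dyadic ranges then recovers (1) with a degraded exponent, as you anticipate.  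So your outline can be salvaged, but not through $T_K$ and (\ref{idealcount}) as written.
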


\begin{proof}
(1) The case $K = \Q$ was proven by Landau \cite{Landaubook}. The adaptation to general $K$ is nearly immediate; the proof for the $\Q$ case relies on the zero-free region for $\zeta(s)$, and an analogous zero-free region exists for the Dedekind zeta function $\zeta_K(s)$ \cite{Landauart}. 

(2) This follows from (1) by partial summation. 
\end{proof}

Our next lemma requires a more complicated estimate.

\begin{lemma}\label{mulog/n}
As $X \to \infty$, we have that \[
\sum_{N(I)\leq X} \frac{\mu(I)\log N(I)}{N(I)} = -\frac{1}{c_K} + O\left(\exp\set{-k(\log X)^{1/12}}\right).
\]
\end{lemma}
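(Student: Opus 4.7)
The plan is to apply Abel summation in order to reduce the sum to one involving
\[
A(X) \coloneqq \sum_{N(I) \le X} \frac{\mu(I)}{N(I)},
\]
already estimated in Lemma~\ref{mobiussum}(2), and then to evaluate the resulting constant by exploiting the Laurent expansion of $\zeta_K(s)$ at $s=1$ via a Mellin-transform identity.

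More concretely, partial summation with respect to the weight $\log t$ yields
\[
\sum_{N(I) \le X} \frac{\mu(I) \log N(I)}{N(I)} = A(X) \log X - \int_1^X \frac{A(t)}{t}\, dt.
\]
By Lemma~\ref{mobiussum}(2), the boundary term is bounded by $O\bigl((\log X)\exp\{-k(\log X)^{1/12}\}\bigr)$; since a polylogarithmic factor is absorbed into $\exp\{-k'(\log X)^{1/12}\}$ for any $k'<k$, this contributes only to the error. The same decay bound makes $\int_1^\infty A(t)/t\, dt$ converge absolutely, and after the substitution $u=\log t$ shows that the tail $\int_X^\infty A(t)/t\, dt$ has the same order of decay. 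So all that remains is to identify $\int_1^\infty A(t)/t\, dt = 1/c_K$.

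For this identification I would use the Mellin transform of $A$. For $\mathrm{Re}(s) > 0$, Fubini (justified by the absolute convergence from $\zeta_K(1+\mathrm{Re}(s)) < \infty$) gives
\[
\int_1^\infty A(t)\, t^{-s-1}\, dt = \sum_{I \subset \O_K} \frac{\mu(I)}{N(I)} \cdot \frac{N(I)^{-s}}{s} = \frac{1}{s\, \zeta_K(s+1)}.
\]
As $s \to 0^+$, the bound $|A(t)|/t = O\bigl(\exp\{-k(\log t)^{1/12}\}/t\bigr)$ supplies an integrable majorant, so dominated convergence lets the left-hand side tend to $\int_1^\infty A(t)/t\, dt$. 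On the right, the simple pole of $\zeta_K$ at $s=1$ with residue $c_K$ gives $s\,\zeta_K(s+1) \to c_K$, hence $1/(s\,\zeta_K(s+1)) \to 1/c_K$. Assembling the pieces yields the stated asymptotic.

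The main conceptual step — and the only place where the constant $-1/c_K$ enters — is the Mellin-transform identification of $\int_1^\infty A(t)/t\, dt$ with the residue of $\zeta_K$; everything else is routine bookkeeping using Lemma~\ref{mobiussum}(2) and the standard absorption of polylogarithmic factors into $\exp\{-k(\log X)^{1/12}\}$. The one subtlety to handle carefully is the passage to the limit $s \to 0^+$ on both sides of the Mellin identity, which is secured by the explicit sub-exponential decay of $A(X)$.
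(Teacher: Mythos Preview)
Your argument is correct, but it takes a genuinely different route from the paper. The paper proceeds arithmetically: it starts from the Prime Ideal Theorem to obtain $\psi(X) = X + O\bigl(X\exp\{-k(\log X)^{1/2}\}\bigr)$, then expands $\psi(X)$ a second time via the convolution identity $\Lambda = -\mu\log * 1$ together with the ideal-counting asymptotic~(\ref{idealcount}), and compares the two expressions for $\psi(X)$ to isolate $\sum_{N(I)\le X}\mu(I)\log N(I)/N(I)$; the constant $-1/c_K$ emerges because the leading coefficient in~(\ref{idealcount}) is $c_K$. You instead Abel-summate against $\log t$ to reduce everything to $A(X)$ and the improper integral $\int_1^\infty A(t)/t\,dt$, and identify the latter with $1/c_K$ by recognising $\int_1^\infty A(t)t^{-s-1}\,dt = 1/(s\,\zeta_K(s+1))$ and letting $s\to 0^+$ via dominated convergence.

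Both approaches rest on the same analytic input (the zero-free region already used in Lemma~\ref{mobiussum}), but package it differently. The paper's route is more self-contained within real-variable arithmetic identities and makes the role of the Prime Ideal Theorem explicit; your route is slicker and exhibits the constant $-1/c_K$ more transparently as the reciprocal of the residue of $\zeta_K$, at the cost of a mild Tauberian-flavoured limiting step. Either is perfectly acceptable here.
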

\begin{proof}
Our starting point is the Prime Ideal Theorem. Partial summation yields: 
\begin{align}\label{blah1}
\theta(X) \coloneqq \sum_{N(\p)\leq X} \log N(\p) &= X + O\left(X\exp\set{-k(\log X)^{1/2}}\right).
\end{align}
We define the Von Mangoldt function for ideals by: \[
\Lambda(I) = \begin{cases}\log N(\p), & \textrm{if }I = \p^k \textrm{ for a prime ideal }\p \textrm{ and }k\geq 1\\ 0,& \textrm{otherwise.}
\end{cases}
\] 
Since the number of prime ideals lying above a given prime of $\Q$ is bounded by the degree $[K:\Q]$, we can extend the classical result
\begin{equation}\label{blah2}
\psi(X) \coloneqq \sum_{N(I)\leq X} \Lambda(I) = \theta(X) + O(\sqrt{X}).
\end{equation} 
Combining (\ref{blah1}) and (\ref{blah2}), we conclude that \begin{equation}\label{psiasym}\psi(X) = X + O\left(X\exp\set{-k(\log X)^{1/2}}\right).\end{equation}
Now we will use the identity $\Lambda = -\mu\log * 1$, where $*$ denotes Dirichlet convolution: 
\begin{align*}
\psi(x) &= \sum_{N(I)\leq X} \sum_{J\supset I} -\mu(J) \log N(J) \\
&= \sum_{N(J)\leq X} -\mu(J)\log N(J) \left( c_K \cdot \frac{X}{N(J)} + O\left(\left(\frac{X}{N(J)}\right)^{1-1/d}\right)\right) \\
&= c_K X \sum_{N(J)\leq X} \frac{-\mu(J)\log(N(J))}{N(J)} + O\left(X^{1-1/d} \sum_{N(I)\leq X} \frac{-\mu(J)\log N(J)}{N(J)^{1-1/d}}\right). \\
\end{align*} Then partial summation combined with Lemma \ref{mobiussum} gives us that 
$$\sum_{N(I)\leq X} \frac{\mu(I)\log(I)}{N(J)^{1-1/d}} = O\left(X^{1/d} \exp\set{-k(\log X)^{1/12}}\right).$$

%Note to self, involves an integral with t^{1-1/d} in denominator. Multiply by (x/t)^{1/d} and then the integral will be exp and some log stuff which is fine

\noindent Putting our estimates together, we obtain: 
\begin{align*}
\psi(X) = c_K X \sum_{N(J)\leq X} \frac{-\mu(J)\log(J)}{N(J)} + O\left(X\exp\set{-k(\log X)^{1/12}}\right).
\end{align*}
Setting this equal to our earlier estimate of $\psi$ in (\ref{psiasym}) gives the lemma.
\end{proof}

\section{Proof of Theorem 1.1}\label{proof1}
\noindent Throughout the remainder of this paper, define $Y$ to vary with $X$ by $Y \coloneqq \exp\set{(\log X)^{2/3}}$. 
We also define a maximal prime counting function by \[Q(I) \coloneqq \#\set{I \subset \p \,\colon N(\p) = M(I)}.\] The proof of Theorem \ref{thm1} is structured as a series of lemmas. Our ultimate goal is to estimate $\sum Q_C(I)$ by comparing it to $\sum Q(I)$; to do so, we will split the sum into sums over small and large primes. (This technique is an adaptation of Dawsey \cite{Dawsey}.) We begin with a bound that we will use frequently to show that certain sums over small primes are negligible.

\begin{lemma}\label{smallprimes}
As $X\rightarrow \infty$, we have that
\[\sum_{N(\p) \leq Y} \Psi(X/N(\p), N(\p)) = O\left(X\exp\set{-k(\log X)^{1/3}}\right).\]
\end{lemma}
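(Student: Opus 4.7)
The plan is to exploit Moree's asymptotic \eqref{Moreelem} together with the Dickman bound \eqref{rhobound}, after splitting the sum at an intermediate prime size. The starting observation is that for every $\p$ with $N(\p) \leq Y = \exp\{(\log X)^{2/3}\}$, the smoothness parameter
\[
\beta_\p := \frac{\log(X/N(\p))}{\log N(\p)}
\]
is bounded below by roughly $(\log X)^{1/3}$: since $\log N(\p) \leq (\log X)^{2/3}$ and $\log(X/N(\p)) \geq (\log X)/2$ for $X$ large, one has $\beta_\p \geq \tfrac{1}{2}(\log X)^{1/3}$. Plugging this into \eqref{rhobound} gives
\[
\rho(\beta_\p) \ll \exp\bigl\{-k(\log X)^{1/3}\log\log X\bigr\},
\]
which easily absorbs any polynomial factor in $\log\log X$ into $\exp\{-k(\log X)^{1/3}\}$.

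Next, introduce the threshold $Y_0 := \exp\{(\log X)^{1/2}\}$ and split the sum at $N(\p) = Y_0$. For the \emph{tiny} primes with $N(\p) \leq Y_0$, use the monotonicity $\Psi(X/N(\p), N(\p)) \leq \Psi(X, Y_0)$ and apply \eqref{Moreelem} once to the pair $(X, Y_0)$; the parameter $\beta = (\log X)^{1/2}$ is comfortably inside Moree's range of validity, yielding $\Psi(X, Y_0) \ll X\exp\{-k(\log X)^{1/2}\log\log X\}$. Multiplying by $\pi(K;Y_0) \ll Y_0/\log Y_0$ produces total contribution $\ll X\exp\{(\log X)^{1/2}(1 - k\log\log X)\}$, which is far smaller than the target error. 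For the \emph{medium} primes with $Y_0 < N(\p) \leq Y$, the bound $\log N(\p) > (\log X)^{1/2}$ places $\beta_\p$ safely inside the validity range of \eqref{Moreelem} (take $\varepsilon = 1/3$, say), so pointwise
\[
\Psi(X/N(\p), N(\p)) \ll \frac{X}{N(\p)}\,\rho(\beta_\p).
\]
Summing against the Mertens-type estimate $\sum_{N(\p) \leq Y} 1/N(\p) = O(\log\log Y)$ (a standard consequence of Landau's Prime Ideal Theorem via partial summation), together with the uniform bound on $\rho(\beta_\p)$ from the first paragraph, produces a contribution of order $X\log\log X\cdot\exp\{-k(\log X)^{1/3}\log\log X\}$, which is $O(X\exp\{-k(\log X)^{1/3}\})$ for a possibly different constant $k$.

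The main delicate point is respecting the domain of validity $\beta \leq (\log X')^{1-\varepsilon}$ of Moree's asymptotic: a direct pointwise application of \eqref{Moreelem} to $(X/N(\p), N(\p))$ degenerates when $N(\p)$ is very small, since then $\beta_\p$ is of order $\log X$, violating the hypothesis. The two-piece decomposition is tailored precisely to this obstacle — the tiny-prime range is handled via a single application of \eqref{Moreelem} at the fixed pair $(X, Y_0)$, with $Y_0$ chosen so that the corresponding $\beta$ lies inside Moree's range yet $Y_0$ is still small enough to be swamped by the decay of $\rho$, while the medium-prime range admits the clean pointwise estimate.
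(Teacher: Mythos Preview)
Your argument is correct, but the paper's proof is considerably shorter and sidesteps the domain-of-validity issue entirely. Instead of splitting the sum and applying Moree's asymptotic pointwise, the paper observes that the map $(\p, I) \mapsto I\p$ sends each pair with $N(\p) \leq Y$ and $I \in \Ss(X/N(\p), N(\p))$ to an element of $\Ss(X, Y)$, with multiplicity at most $[K:\Q]$ (since at most $[K:\Q]$ prime ideals can share a given norm). This immediately gives
\[
\sum_{N(\p)\leq Y} \Psi(X/N(\p), N(\p)) \;\leq\; [K:\Q]\,\Psi(X, Y),
\]
and a single application of \eqref{Moreelem} at the pair $(X,Y)$ with $\beta = (\log X)^{1/3}$ finishes the proof. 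What the paper's approach buys is the elimination of both the intermediate threshold $Y_0$ and the Mertens-type prime sum; what your approach buys is a slightly stronger final bound (an extra $\log\log X$ in the exponent, from your careful tracking of $\rho(\beta_\p)$), though this plays no role downstream.
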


\begin{proof}
For every prime ideal $\p$, where $N(\p)\leq Y$, and for every $I \in \Ss(X/N(\p), N(\p))$, we obtain an ideal $I\p \in \Ss(X, Y)$. The number of pairs $(\p, I)$ that give the same $I\p$ is at most $[K:\Q]$, since only $[K:\Q]$ ideals can possibly have norm $N(\p)$. Therefore \[\sum_{N(\p) \leq Y} \Psi(X/N(\p), N(\p)) \leq [K:\Q] \Psi(X, Y).\] Now we apply the estimate for $\Psi(X, Y)$ in (\ref{Moreelem}).
\end{proof}

We use this lemma to show that the sums $\sum Q_C(I)$ and $\sum Q(I)$ are well-approximated by the same integral; this will be useful because $\sum Q(I)$ is easier to understand. 

\begin{lemma}\label{QCisint}
As $X\rightarrow \infty$, we have that:
\begin{align*}&(1) \;\;\; \sum_{2 \leq N(I) \leq X} Q_C(I) =  \frac{|C|}{|G|}\int_Y^X \Psi(X/t, t) \frac{dt}{\log t} + O\left(X \exp\set{-k (\log X)^{1/3}}\right). \\&(2) \; \; \; \sum_{2\leq N(I) \leq X} Q(I) =  \int_Y^X \Psi(X/t, t) \frac{dt}{\log t} + O\left(X \exp\set{-k (\log X)^{1/3}}\right).\end{align*}
\end{lemma}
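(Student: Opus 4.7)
The plan is to reparametrize via the bijection $I \leftrightarrow (\p, J)$, where $\p$ is a chosen maximal-norm prime factor of $I$ (with $\artin{\p} = C$ for part (1)) and $J = I/\p \in \Ss(X/N(\p), N(\p))$. This yields
\[
\sum_{2 \leq N(I) \leq X} Q_C(I) = \sum_{\substack{\p: \artin{\p}=C \\ N(\p) \leq X}} \Psi(X/N(\p), N(\p)),
\]
with an analogous identity for $\sum Q(I)$ (drop the Artin condition). I then split the prime sum at $Y$. The sub-sum over $N(\p) \leq Y$ is controlled, with or without the Artin restriction, by Lemma \ref{smallprimes} and contributes $O(X \exp\{-k(\log X)^{1/3}\})$.

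For $Y < N(\p) \leq X$, the key move is to swap the order of summation, sending the outer sum onto $J$. For each $J$ the valid primes $\p$ are those with $\artin{\p} = C$ satisfying (essentially) $\max(Y, M(J)) \leq N(\p) \leq X/N(J)$, counted up to a bounded off-by-one by $\pi_C(L/K; X/N(J)) - \pi_C(L/K; \max(Y, M(J)))$. Substituting the Lagarias--Odlyzko asymptotic $\pi_C(L/K; t) = (|C|/|G|)\Li(t) + E(t)$ with $|E(t)| \ll t \exp\{-c \sqrt{\log t}\}$, the $\Li$-part of the count gives $\int_{\max(Y,M(J))}^{X/N(J)} dt/\log t$; reversing the swap of summation and integration assembles this into exactly $\frac{|C|}{|G|} \int_Y^X \Psi(X/t, t) \frac{dt}{\log t}$, the claimed main term.

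It remains to bound the two error sums $\sum_J |E(X/N(J))|$ and $\sum_J |E(\max(Y, M(J)))|$. The range of summation forces $X/N(J), \max(Y, M(J)) \geq Y$, so the exponential factor in $E$ is uniformly $\leq \exp\{-c(\log X)^{1/3}\}$. Partial summation on (\ref{idealcount}) gives $\sum_{N(J) \leq X/Y} X/N(J) \ll X \log X$, and bounding $\max(Y, M(J)) \leq X/N(J)$ (forced by the prime interval being nonempty) controls the second error sum by the same quantity. Thus the total error is $O(X \log X \cdot \exp\{-c(\log X)^{1/3}\}) = O(X \exp\{-k(\log X)^{1/3}\})$, finishing part (1). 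Part (2) is handled identically with Landau's Prime Ideal Theorem in place of Chebotarev (or one may sum part (1) over all conjugacy classes and absorb the $O(X/Y)$ contribution of the finitely many ramified primes). The main subtlety lies in the choice of the order-swap: a direct Abel summation on the $\p$-variable is awkward because $\Psi(X/t, t)$ is a non-monotonic step function in $t$ whose total variation is too large for a crude Stieltjes bound.
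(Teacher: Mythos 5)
Your proof is correct and follows essentially the same route as the paper: the same reparametrization giving $\sum Q_C(I) = \sum_\p \Psi(X/N(\p), N(\p))$, the same split at $Y$ with Lemma~\ref{smallprimes} handling small primes, the same swap of summation to expose Chebotarev/Landau error terms on the $J$-variable, and the same bound on $\sum 1/N(J)$ via (\ref{idealcount}). Your closing observation that a direct Stieltjes/Abel summation against $\Psi(X/t,t)$ would be awkward is a fair remark the paper leaves implicit, but the substance of the argument matches.
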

\begin{proof}
We only prove (1), since the proofs are essentially identical. For the case (2), it suffices to replace all $Q_C(I)$ by $Q(I)$ and remove all conditions $\artin{\p} = C$ from sums.

A simple counting argument shows 
\[\sum_{2 \leq N(I) \leq X} Q_C(I) = \sum_{\substack{N(\p) \leq X \\ \left[\frac{L/K}{\p}\right] = C}} \Psi(X/N(\p), N(\p)).\]
We split our sum into small and large primes. Then \[\sum_{2 \leq N(I) \leq X} Q_C(I)  = \sum_{\substack{N(\p) \leq Y \\ \left[\frac{L/K}{\p}\right] = C}} \Psi(X/N(\p), N(\p)) + \sum_{\substack{Y < N(\p) \leq X \\ \left[\frac{L/K}{\p}\right] = C}} \Psi(X/N(\p), N(\p)).\] The first term is clearly $O\left(X \exp\set{-k (\log X)^{1/3}}\right)$ by Lemma \ref{smallprimes}.

We wish to estimate the second sum by an integral. We study the difference, defined by \[E \coloneqq \sum_{\substack{Y < N(\p) \leq X\\ \artin{\p} = C}} \Psi(X/N(\p), N(\p)) - \frac{|C|}{|G|}\int_Y^X \Psi(X/t, t) \frac{dt}{\log t}.\] We rewrite $E$ using the definition of $\Psi$ and then switch orders of summation and integration:
\begin{align*}
E &= \sum_{\substack{Y \leq N(\p) \leq X \\ \artin{\p} = C}} \sum_{I \in \Ss(X/N(\p), N(\p))} 1 - \frac{|C|}{|G|} \int_Y^X \sum_{I \in \Ss(X/N(\p), N(\p))} 1 \frac{dt}{\log t} \\
&= \sum_{\substack{1 \leq N(I) \leq X/ Y\\ M(I) \leq X/N(I)}}\Bigg(\sum_{\substack{M(I) \leq N(\p) \leq X/N(I) \\ N(\p) > Y \\ \artin{\p} = C}} 1 - \frac{|C|}{|G|}\int_{\max(M(I), Y)}^{X/N(I)} \frac{dt}{\log t}\Bigg)\\
&= \sum_{\substack{1 \leq N(I) \leq X/ Y\\ M(I) \leq X/N(I)}} \Bigg(\pi_C(L/K; X/N(I)) - \pi_C(L/K; \max(M(I), Y)) \\ & \hspace{4cm}- \frac{|C|}{|G|} \Li (X/N(I)) + \frac{|C|}{|G|} \Li(\max(M(I),Y))\Bigg).\\
\end{align*} 
The Chebotarev Density Theorem bounds \[|E| \leq  \sum_{\substack{1 \leq N(I) \leq X/ Y\\ M(I) \leq X/N(I)}} c_1(X/N(I))\exp\set{-c_2\sqrt{\log(X/N(I))}},\] where we use the fact that $\max(M(I), Y) \leq X/N(I)$ for all $I$ in the sum. (To show (2), we apply the Prime Ideal Theorem instead of the Chebotarev Density Theorem.) At this point, we bound \[\exp\set{-c_2\sqrt{\log (X/n)}} \leq \exp\set{-k (\log X)^{1/3}},\] which yields: \begin{align*}E & = O\left(X\exp\set{-k (\log X)^{1/3}}\right).\end{align*} 
Note that we have used a bound on the sum $1/N(I)$ which can be easily deduced by partial summation with (\ref{idealcount}); alternatively, it can be found in \cite{Shapiro}.

We conclude that \[\sum_{2\leq N(I) \leq X} Q_C(I) = \frac{|C|}{|G|} \int_Y^X \Psi(X/t, t) \frac{dt}{\log t} + O\left(X\exp\set{-k(\log X)^{1/3}}\right).\]

\end{proof}
The next step in the proof of Theorem 1.1 is to estimate $\sum Q(I)$. 

\begin{lemma}\label{QisX} As $X\to \infty$, the following asymptotic holds:
\[\sum_{2\leq N(I) \leq X} Q(I) = c_K X + O\left(X \exp\set{-k(\log X)^{1/3}}\right).\]
\end{lemma}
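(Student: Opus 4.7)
The plan is to apply Lemma \ref{QCisint}(2) to reduce the task to evaluating the integral $\int_Y^X \Psi(X/t, t)\,dt/\log t$, which I will then attack with a change of variables followed by Moree's estimate (\ref{Moreelem}) and a classical Dickman-function identity. Specifically, by Lemma \ref{QCisint}(2) it suffices to prove that this integral equals $c_K X + O(X\exp\{-k(\log X)^{1/3}\})$.

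I would first substitute $t = X^{1/u}$; a routine calculation gives $dt/\log t = -t\,du/u$, and the range $t \in [Y, X]$ becomes $u \in [1, (\log X)^{1/3}]$. After this substitution, using $X^{(u-1)/u} \cdot X^{1/u} = X$, the integral becomes
\[\int_1^{(\log X)^{1/3}} \Psi\bigl(X^{(u-1)/u}, X^{1/u}\bigr) \cdot \frac{X^{1/u}}{u} \, du.\]
Applying Moree's asymptotic with $\beta = u-1$ and pulling out constants yields the main term $c_K X \int_1^{(\log X)^{1/3}} \rho(u-1)/u\, du$. For the subrange $u \in [1, 2]$, where $\beta \leq 1$ lies outside Moree's stated range but the smoothness condition is automatic (since $X^{(u-1)/u} \leq X^{1/u}$), I would instead invoke the Murty--Van Order ideal count (\ref{idealcount}) directly, which matches the $\rho(u-1) = 1$ prediction.

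To evaluate the Dickman integral, substitute $w = u-1$ and exploit the functional equation $-v\rho'(v) = \rho(v-1)$ with $v = w+1$, which rewrites the integrand as $\rho(w)/(w+1) = -\rho'(w+1)$. A telescoping integration by parts then gives
\[\int_0^{\infty} \frac{\rho(w)}{w+1}\,dw \;=\; \rho(1) \;-\; \lim_{v\to\infty}\rho(v) \;=\; 1,\]
so the leading term is exactly $c_K X$. The tail $\int_{(\log X)^{1/3} - 1}^\infty \rho(w)/(w+1)\,dw$ is controlled by (\ref{rhobound}), whose super-exponential decay of $\rho$ shows the tail contributes only $O(\exp\{-k(\log X)^{1/3}\})$.

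The main obstacle will be the careful bookkeeping of the error terms from Moree's estimate on the bulk range $u \in [2, (\log X)^{1/3}]$ and from the ideal count near $u = 1$, which together with the Dickman tail must combine to fit inside the $O(X \exp\{-k (\log X)^{1/3}\})$ budget. The Moree relative error $O(u \log u / \log X)$ weighted against the rapid decay of $\rho(u-1)$ should integrate to a small absolute contribution, and similarly the polynomial-saving error from (\ref{idealcount}) on the transitional range must be shown to integrate to an acceptable size.
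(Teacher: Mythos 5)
Your route is genuinely different from the paper's: you want to use Lemma~\ref{QCisint}(2) to reduce to the integral $\int_Y^X \Psi(X/t,t)\,dt/\log t$ and then evaluate that integral directly via Moree's asymptotic and the Dickman identity $\int_0^\infty \rho(w)/(w+1)\,dw = 1$ (which you derive correctly from the functional equation). The paper instead proves Lemma~\ref{QisX} independently of Lemma~\ref{QCisint}, by the elementary decomposition
\[\sum_{2\le N(I)\le X} Q(I) = \sum_{2\le N(I)\le X} 1 \;+\; \sum_{\substack{2\le N(I)\le X\\ Q(I)\ge 2}}(Q(I)-1),\]
reading the main term $c_K X$ off from the ideal count~(\ref{idealcount}) with its power-saving error $O(X^{1-1/d})$, and using Moree's formula \emph{only as an upper bound} for the overcounting term (where the implied constants are harmless). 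In the proof of Theorem~1.1 the two lemmas are then combined to \emph{deduce} the value of the integral; you attempt to go the other way.

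The problem is that your direction cannot hit the stated error budget. Moree's formula~(\ref{Moreelem}) carries a \emph{relative} error of $O(\beta\log(\beta+1)/\log X)$. In your $u$-variable this becomes $O(u\log u/\log X)$, so the absolute error in $\Psi(X^{(u-1)/u},X^{1/u})$ is of size $X^{(u-1)/u}\rho(u-1)\cdot O(u\log u/\log X)$; integrating against $X^{1/u}\,du/u$ produces an error of order
\[\frac{X}{\log X}\int_2^{(\log X)^{1/3}} \rho(u-1)\log u\,du = \Theta\!\left(\frac{X}{\log X}\right).\]
A similar $\Theta(X/\log X)$ shows up from the power-saving error in~(\ref{idealcount}) once it is integrated across $u\in[1,2]$: near $u=1$ the factor $X^{1/u}/u$ is $\approx X$, and the remaining integral of $(X/t)^{-1/d}/\log t$ contributes another $\Theta(1/\log X)$. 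Since $X/\log X \gg X\exp\{-k(\log X)^{1/3}\}$, your approach proves at best $\sum Q(I) = c_K X + O(X/\log X)$, which is too weak — not only for the statement of Lemma~\ref{QisX}, but downstream as well: Lemma~\ref{lemma4} needs $\int_2^\infty E(t)/t^2\,dt$ to converge, and $E(t)=O(t/\log t)$ does not suffice. The paper sidesteps this entirely because its main term comes from a single application of~(\ref{idealcount}), not from an integral average, so the error is $O(X^{1-1/d})$ rather than $O(X/\log X)$.

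A secondary concern: as printed, (\ref{Moreelem}) reads $\Psi(X,Y)=X\rho(\beta)(1+O(\cdots))$ with no factor of $c_K$; plugging in $\beta=1$ this contradicts~(\ref{idealcount}) unless $c_K=1$, so either the displayed formula is missing a $c_K$ (Moree's actual theorem does have it) or it cannot be used for a main-term computation at all. Your phrase ``pulling out constants'' silently inserts the $c_K$, but that is not supported by~(\ref{Moreelem}) as written. This doesn't hurt the paper's proof, which uses~(\ref{Moreelem}) only for upper bounds, but it does affect your proposed main-term computation and is worth making explicit.
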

\begin{proof}
 Write \[\sum_{2 \leq N(I) \leq X} Q(I) = \sum_{2 \leq N(I) \leq X} 1 + \sum_{\substack{2 \leq N(I) \leq X\\ Q(I) \geq 2}} (Q(I) - 1).\] Now, the first term, by (\ref{idealcount}), is \[c_K X + O(X^{1 - 1/d}),\] where $d = [K: \Q]$. This error term is better than required for the lemma, so we need only consider the second term. Observe that:

\[\sum_{\substack{2 \leq N(I) \leq X \\Q(I) \geq 2}} (Q(I) - 1) \leq  d^2 \sum_{N(\p) \leq X} \Psi(X/N(\p)^2, N(\p)).\] As before, we split our sum into small and large primes: \[\sum_{N(\p) \leq X} \Psi(X/N(\p)^2, N(\p)) = \sum_{N(\p) \leq Y} \Psi(X/N(\p)^2, N(\p)) + \sum_{Y < N(\p) \leq \sqrt{X}} \Psi(X/N(\p)^2, N(\p)).\] Since we certainly have the inequality $\Psi(X/N(\p)^2, N(\p)) \leq \Psi(X/N(\p), N(\p)),$ we can apply Lemma \ref{smallprimes} and restrict our attention to the sum over large primes. Next, we use a similar integral trick to that in the proof of Lemma \ref{QCisint}. Write \[E' \coloneqq \sum_{Y < N(\p) \leq \sqrt{X}} \Psi(X/N(\p)^2, N(\p)) - \int_Y^{\sqrt{X}} \Psi(X/N(\p)^2, N(\p)).\] We switch orders of summation and integration to write: \begin{align*}
E' &= \sum_{\substack{1 \leq N(I) \leq X/Y^2\\M(I) \leq \sqrt{X/N(I)}}} \Bigg(\sum_{\substack{M(I) \leq N(\p) \leq \sqrt{X/N(I)} \\ N(\p) > Y}} 1 - \int_{\max(M(I), Y)}^{\sqrt{X/N(I)}} \frac{dt}{\log t} \Bigg) \\
&= \sum_{\substack{1 \leq N(I) \leq X/Y^2\\M(I) \leq \sqrt{X/N(I)}}} \bigg(\pi_K\Big(\sqrt{X/N(I)}\Big) - \pi_K(\max(M(I), Y)) \\ &\hspace{4cm}- \Li \Big(\sqrt{X/N(I)}\Big) + \Li(\max(M(I), Y))\bigg).
\end{align*} At this point, the Prime Ideal Theorem tells us that 
\begin{align*}
E' &= O\Bigg( \sum_{\substack{1 \leq N(I) \leq X/Y^2\\M(I) \leq \sqrt{X/N(I)}}}  \sqrt{X/N(I)} \exp\set{-k(\log(X/N(I)))^{1/2}}\Bigg).
\end{align*} To bound this sum, we replace $(\log(X/N(I)))^{1/2}$ by $(\log X)^{1/3}$ at the cost of a constant, since $X/N(I) \geq Y^2 = \exp\set{2 (\log (X))^{2/3}}.$ Using partial summation, we find that \[E' = O\left(X \exp\set{-k(\log X)^{1/3}}\right).\] As a result, we obtain: \[\sum_{2 \leq N(I) \leq X} Q(I)  = c_K X + \int_Y^{\sqrt{X}} \Psi( X/t^2, t) \frac{dt}{\log t} + O\left(X\exp\set{-(\log X)^{1/3}}\right).\]

Finally, we estimate the integral \[\int_Y^{\sqrt{X}} \Psi(X/t^2, t) \frac{dt}{\log t}.\] We will bound this integral using (\ref{Moreelem}), the asymptotic bound for $\Psi(X, Y)$. Set $\beta = \log X / \log t$. Note the asymptotic bound applies uniformly for $Y \leq t \leq \sqrt{X}$, taking $\varepsilon = 2/3$. Indeed, we find that

\begin{align*}
\int_Y^{\sqrt{X}} \Psi(X/t^2) \frac{dt}{\log t} &\leq k\int_Y^{\sqrt{X}} \frac{X}{t^2} \exp\set{-(\beta - 2)\log(\beta - 2)/e} \frac{dt}{\log t}\\
&= k\int_2^{(\log X)^{1/3}} \frac{X\log t}{t \log X} \exp\set{-(\beta - 2)\log(\beta - 2)/e} d\beta \\
&= k\int_2^{(\log X)^{1/3}} \frac{X}{\beta} \exp\set{-\log X / \beta} \exp\set{-(\beta - 2)\log(\beta - 2)/2} d\beta \\
&\leq k X \exp\set{-(\log X)^{2/3}} \int_2^{\infty} \exp\set{-(\beta - 2)\log(\beta - 2)/2} \frac{d\beta}{\beta} \\
&= O\left(X \exp\set{-(\log X)^{2/3}}\right).
\end{align*}

\end{proof}
\begin{proof}[Proof of Theorem 1.1]

Lemma \ref{QCisint} (2) and Lemma \ref{QisX} imply that 
\[\int_Y^X \Psi(X/t, t) \frac{dt}{\log t} = c_K X +  O\left(X \exp\set{-k (\log X)^{1/3}}\right).\]

We therefore conclude that \[\sum_{2 \leq N(I) \leq X} Q_C(I) = c_K\cdot \frac{| C|}{| G|}  X + O\left( X \exp\set{-k (\log X)^{1/3}}\right).\]

\end{proof}
\section{Proof of Theorem 1.2}\label{proof2}
\noindent We begin with a lemma that reformulates Theorem 1.1 in a more useful form to apply the duality result of Lemma \ref{duality}. 

\begin{lemma}\label{lemma4} As $X\to\infty$, we have:
\[
\sum_{2\leq N(I)\leq X} \frac{Q_C(I)}{N(I)} = c_K\cdot \frac{|C|}{|G|}\log(X) + k' + O\left(\exp\set{-k(\log X)^{1/3}}\right),
\] where $k'$ is a constant depending only on $L$, $K$, and $C$.
\end{lemma}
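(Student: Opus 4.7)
The plan is to derive this lemma from Theorem 1.1 by a standard Abel/partial summation argument, showing that the resulting integral of the error converges and contributes only to the constant $k'$ plus an admissible error.

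Let me set $A(X) := \sum_{2 \leq N(I) \leq X} Q_C(I)$, so by Theorem 1.1,
\[
A(X) = c_K \cdot \frac{|C|}{|G|} X + E(X), \qquad E(X) = O\!\left(X\exp\{-k(\log X)^{1/3}\}\right).
\]
Viewing $A(X)$ as a step function supported on integer norms, Abel summation with $f(t) = 1/t$ gives
\[
\sum_{2 \leq N(I) \leq X} \frac{Q_C(I)}{N(I)} \;=\; \frac{A(X)}{X} \;+\; \int_2^X \frac{A(t)}{t^2}\,dt.
\]
Substituting the asymptotic for $A$ and using $\int_2^X dt/t = \log X - \log 2$, the main term of the integral contributes $c_K \cdot \frac{|C|}{|G|} \log X - c_K \cdot \frac{|C|}{|G|} \log 2$, while the boundary term $A(X)/X$ contributes $c_K \cdot \frac{|C|}{|G|} + O(\exp\{-k(\log X)^{1/3}\})$.

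The heart of the argument is controlling the remaining piece
\[
\int_2^X \frac{E(t)}{t^2}\,dt.
\]
Since $E(t)/t^2 = O(t^{-1}\exp\{-k(\log t)^{1/3}\})$, the substitution $u = \log t$, $v = u^{1/3}$ shows
\[
\int_X^\infty \frac{\exp\{-k(\log t)^{1/3}\}}{t}\,dt \;=\; \int_{(\log X)^{1/3}}^\infty 3v^2 e^{-kv}\,dv \;=\; O\!\left(\exp\{-k'(\log X)^{1/3}\}\right)
\]
for any $k' < k$. Hence $\int_2^\infty E(t)/t^2\,dt$ converges to some finite constant $L$ depending only on $L/K$ and $C$, and
\[
\int_2^X \frac{E(t)}{t^2}\,dt \;=\; L \;+\; O\!\left(\exp\{-k(\log X)^{1/3}\}\right).
\]

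Assembling the pieces, we obtain the claimed asymptotic with
\[
k' \;=\; c_K \cdot \frac{|C|}{|G|}(1 - \log 2) \;+\; L.
\]
No step is a real obstacle; the only care needed is the elementary but slightly fiddly tail estimate for $\int_X^\infty t^{-1}\exp\{-k(\log t)^{1/3}\}\,dt$, where one must verify that the polynomial factor $v^2$ arising from the substitution is absorbed into a slightly smaller constant in the exponent, preserving the error shape $\exp\{-k(\log X)^{1/3}\}$ used throughout the paper.
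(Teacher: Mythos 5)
Your argument is correct and follows essentially the same route as the paper: both apply partial/Abel summation to the asymptotic of Theorem~1.1, peel off the main term $c_K\frac{|C|}{|G|}\log X$, and absorb the convergent integral of the error $E(t)/t^2$ (together with the boundary and $\log 2$ terms) into the constant $k'$. Your explicit substitution verifying that $\int_X^\infty t^{-1}\exp\{-k(\log t)^{1/3}\}\,dt = O(\exp\{-k(\log X)^{1/3}\})$ is a slightly more careful rendering of a step the paper leaves implicit; the only blemish is the double use of the symbol $k'$ (once for the slightly reduced exponent constant, once for the additive constant in the lemma), which should be disambiguated.
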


\begin{proof}
We begin from Theorem 1.1, which asserts that \[\sum_{2 \leq N(I) \leq X} Q_C(I) = c_K\cdot \frac{|C|}{|G|}X + O\left(X\exp\set{-k (\log X)^{1/3}}\right).\] For ease of presentation, we define \[T(X) \coloneqq \sum_{2 \leq N(I) \leq X} Q_C(I)\] and \[E(X) \coloneqq T(X) - c_K \cdot \frac{|C|}{|G|} X.\] Thinking of $T$ as a stair-step function, we compute:
\begin{align*}
\sum_{2\leq N(I)\leq X} \frac{Q_C(I)}{N(I)} &= \int_2^X \frac{dT(t)}{t}\\
&= c_K\cdot \frac{|C|}{|G|} \int_2^X \frac{dt}{t} + \int_2^X \frac{dE(t)}{t} \\
&= c_K \cdot \frac{|C|}{|G|} \log(X) + \frac{E(t)}{t}\bigg|_2^X - \int_2^X \frac{E(t)}{t^2} dt,
\end{align*} where the last two terms are the error we wish to bound.
Since $E(X) = O\left(X\exp\set{-k (\log X)^{1/3}}\right)$, we have that \[\int_2^\infty \frac{E(t)}{t^2} dt < \infty.\] Therefore we rewrite:
\begin{align*}
\sum_{2\leq N(I) \leq X} \frac{Q_C(I)}{N(I)} &= c_K\cdot  \frac{|C|}{|G|} \log (X) + \frac{E(X)}{X} + \int_X^\infty \frac{E(t)}{t^2} dt + k'\\
&= c_K \cdot  \frac{|C|}{|G|} \log(X) + k' + O\left(\exp\set{-k (\log X)^{1/3}}\right).
\end{align*}
\end{proof}

%--------------BARRIER------------------------
\begin{proof}[Proof of Theorem 1.2]
Now we consider the sum in the theorem statement. Using the definition of $f$ in Lemma \ref{duality}, we rewrite 
\[
\sum_{\substack{2\leq N(I)\leq X \\ I\in S(L/K;C)}}  \frac{\mu(I)}{N(I)} = \sum_{2\leq N(I)\leq X} \frac{\mu(I)f(I)}{N(I)}. \]
Using Lemma \ref{duality}, this is equivalent to:
\[
-\sum_{N(I)\leq X}\frac{1}{N(I)}\sum_{J\supset I} \mu(I/J)Q_C(J) = -\sum_{N(I)\leq X}\sum_{J\supset I} \frac{\mu(I/J)}{N(I)/N(J)} \cdot \frac{Q_C(J)}{N(J)}.
\] We split this sum into two parts. Let $I/J = I'$. Given an ideal $I'$, the possible $J$ so that $I'J = I$ are those with $N(J) \leq X/N(I')$. We split the sum into sums over $N(I') \leq \sqrt{X}$ and $\sqrt{X} < N(I') \leq X$:
\begin{align*}
    -\sum_{\substack{1 \leq N(I') \leq \sqrt{X}\\ N(J) \leq X / N(I') }} \frac{\mu(I')}{N(I')} \cdot \frac{Q_C(J)}{N(J)} - \sum_{\substack{\sqrt{X} < N(I') \leq X \\ N(J) \leq X / N(I')}} \frac{\mu(I')}{N(I')} \cdot \frac{Q_C(J)}{N(J)}.
\end{align*}

We call these terms $S_1$ and $S_2$, respectively. For readability, we  write $I'$ as $I$. We first analyze $S_1$, which will turn out to give the main term. We apply Lemma \ref{lemma4}. 

\begin{align*}
-S_1 &= \sum_{\substack{1 \leq N(I) \leq \sqrt{X} \\ N(J) \leq X / N(I)}} \frac{\mu(I)}{N(I)} \cdot \frac{Q_C(J)}{N(J)} \\
&= \sum_{N(I) \leq \sqrt{X}} \frac{\mu(I)}{N(I)}  \left(c_K\cdot \frac{|C|}{| G|}  \log \Big(\frac{X}{N(I)}\Big) +k' + O\left(\exp\set{-k(\log X)^{1/3}}\right)\right). \\
\end{align*}
Now we use Lemma \ref{mobiussum} to write:
\begin{align*}
-S_1 =  \left(c_K\cdot \frac{|C|}{| G|} \log X\right)& \sum_{N(I) \leq \sqrt{X}} \frac{\mu(I)}{N(I)} \\ &- c_K \cdot  \frac{|C|}{|G|} \sum_{N(I) \leq \sqrt{X}} \frac{\mu(I) \log N(I)}{N(I)} + O\left(\exp\set{-k(\log X)^{1/12}}\right);
\end{align*}
whence, applying Lemmas \ref{mobiussum} and \ref{mulog/n}, we conclude that \[S_1 = -\frac{|C|}{|G|} + O\left(\exp\set{-k (\log X)^{1/12}}\right).\] Now we bound $S_2$. We can switch the order of summation to write $-S_2$ as 
\begin{align*}
-S_2 = \sum_{N(J) < \sqrt{X}} \frac{Q_C(J)}{N(J)} \sum_{\sqrt{X} < N(I) \leq X/N(J)} \frac{\mu(I)}{N(I)}.
\end{align*}
Applying Lemma \ref{mobiussum} and using that $Q_C(J)$ is bounded by the degree of the extension, we have that 
\begin{align*}
S_2 &= O\left(\exp\set{-k (\log (X)^{1/12}}\right).
\end{align*}

We conclude that \[\sum_{\substack{N(I) \leq X\\ I\in S(L/K;C)}} \frac{\mu(I) f(I)}{N(I)} = -\frac{| C|}{| G|} + O\left(\exp \set{-k (\log X)^{1/12}}\right).\]
Taking the limit, we finally obtain the desired conclusion: \[-\lim_{X \to \infty} \sum_{\substack{N(I) \leq X\\ I\in S(L/K;C)}} \frac{\mu(I)}{N(I)} = \frac{| C|}{| G|}.\]

\end{proof}

\section{Examples}
\noindent Here we illustrate Theorem 1.2 with two examples. As these examples will show, the convergence of the limit can be quite slow. 

\noindent \textbf{Example 1.} First we consider a quadratic extension. Let $K = \Q(\zeta_7)$, where $\zeta_7$ is a primitive $7$th root of unity, and $L = \Q(\zeta_7, \sqrt{2})$. Then we know that $\Gal(L/K) \simeq \Z/2\Z$. Since this group is abelian, the conjugacy classes are in one-to-one correspondance with the elements. 
We denote \[S_C(X) \coloneqq -\sum_{\substack{2\leq N(I)\leq X\\ I\in S(L/K;C)}} \frac{\mu(I)}{N(I)}.\]
According to Theorem 1.2, we have that
\[
\lim_{X\rightarrow\infty} S_{\set{0}}(X) = \lim_{X\rightarrow\infty} S_{\set{1}}(X) = \frac{1}{2}. 
\]Indeed, we compute the following actual values of the sums for increasing values of $X$. As we can see, the convergence is quite slow and subject to fluctuation.

\begin{center}
\begin{tabular}{|c|c|c|}
\hline
    & $\{0\}$ & $\{1\}$ \\
 \hline
$S_C(10,000)$  & 0.4709 &0.5117 \\
\hline
$S_C(20,000)$ & 0.4931& 0.5075\\
\hline
$S_C(30,000)$ & 0.5032 & 0.5041 \\
\hline
$S_C(40,000)$ & 0.5042 & 0.5062\\
\Xhline{2\arrayrulewidth}
$|C|/|G|$ & 0.5000 & 0.5000 \\
\hline 
\end{tabular}
    
\end{center}
\medskip 

\noindent \textbf{Example 2.} Next, we consider a nonabelian case. Let $K=\Q(i)$ and let $L$ be the splitting field of $x^3+x+1$ over $K$. It is easy to check that $\Gal(L/K) \simeq S_3$, so the conjugacy classes are the different cycle compositions: the identity, transpositions, and 3-cycles.  A computer can easily determine the conjugacy class $\artin{\p}$ by factoring $x^3 + x + 1$ over the finite field $\O_K/\p$.   The table below compares $S_C(X)$ to the limits predicted by Theorem 1.2 in increments up to $500,000$. 
\begin{center}
    \begin{tabular}{|c|c|c|c|c|c|}
        \hline
        & [(1)] &  [(12)] & [(123)] \\

        \hline
        $S_C(100,000)$ & 0.1420 & 0.5268 & 0.3376 \\
        \hline
        $S_C(200,000)$ & 0.1461 & 0.5213 & 0.3380 \\
        \hline
        $S_C(300,000)$ & 0.1485 & 0.5183 & 0.3374 \\
        \hline
        $S_C(400,000)$ & 0.1499 & 0.5164 & 0.3374 \\
        \hline 
        $S_C(500,000) $ & 0.1510 & 0.5151& 0.3374  \\
\Xhline{2\arrayrulewidth}
        $|C|/|G|$ & 0.1667 & 0.5000 & 0.3333  \\
        \hline
    \end{tabular}
\end{center}

\section*{Acknowledgements}
\noindent The authors would like to thank Professor Ken Ono and Professor Larry Rolen for their guidance and suggestions. They also thank Emory University, the Asa Griggs Candler Fund, and NSF grant DMS-1557960.

\bibliographystyle{abbrv}
\bibliography{biblio}

\end{document}